\documentclass[reqno]{amsart}
\usepackage{hyperref}
\usepackage{amssymb}

\allowdisplaybreaks

\theoremstyle{plain}
\newtheorem{theorem}{Theorem}[section]

\newtheorem{lemma}[theorem]{Lemma}

\theoremstyle{definition}
\newtheorem{definition}[theorem]{Definition}

\theoremstyle{remark}
\newtheorem{remark}[theorem]{Remark}
\numberwithin{equation}{section}
\DeclareMathOperator*{\esssup}{ess\,sup}
\DeclareMathOperator*{\essinf}{ess\,inf}

\begin{document}

\title[Blow-up problem for porous medium equation]
{Blow-up problem for porous medium equation with absorption under nonlinear nonlocal boundary condition}

\author[A. Gladkov]{Alexander Gladkov}
\address{Alexander Gladkov \\ Department of Mechanics and Mathematics
\\ Belarusian State University \\  4  Nezavisimosti Avenue \\ 220030
Minsk, Belarus }    \email{gladkoval@bsu.by}

\subjclass[2020]{ 35K20, 35K61, 35K65}
\keywords{Porous medium equation; nonlocal boundary condition; blow-up; global existence}

\begin{abstract}
In this paper we consider initial boundary value problem 
for porous medium equation with absorption under
nonlinear nonlocal boundary condition and nonnegative initial datum.
We prove local existence, comparison principle, global existence and blow-up of solutions.

\end{abstract}

\maketitle

\section{Introduction}

In this paper we consider the initial boundary value problem for
nonlinear parabolic equation
\begin{equation}\label{v:u}
    u_t= \Delta u^\mu - a u^\nu,\;x\in\Omega,\;t>0,
\end{equation}
with nonlinear nonlocal boundary condition
\begin{equation}\label{v:g}
\frac{\partial u(x,t)}{\partial\bf{n}}=
\int_{\Omega}{k(x,y,t)u^l(y,t)}\,dy, \; x\in\partial\Omega, \; t > 0,
\end{equation}
and initial datum
\begin{equation}\label{v:n}
    u(x,0)=u_{0}(x),\; x\in\Omega,
\end{equation}
where $\mu >1,\,$ $a,\, \nu,\, l$ are positive numbers, $\Omega$ is a bounded domain in $\mathbb{R}^N$
for $N\geq1$ with smooth boundary $\partial\Omega$, $\bf{n} $ is unit
outward normal on $\partial\Omega.$

Throughout this paper we suppose that nonnegative functions
$k(x,y,t)$ and $u_0(x)$ satisfy the following conditions
\begin{equation*}
k(x, y, t) \in L^{\infty}_{loc} (\partial \Omega \times \overline{\Omega} \times [0, \infty)), \; u_0(x) \in L^\infty (\Omega).
\end{equation*}

Various phenomena in the natural sciences and engineering lead to the nonclassical mathematical models subject to nonlocal boundary conditions.
For global existence and blow-up of solutions for parabolic equations and systems
with nonlocal boundary conditions we refer to \cite{F,S,CL,ZK,Lu,YX,YF,MV,FZ3,GN1,KT,GN2,GG2,AD,KD,LZL} and the references therein.
In particular, the blow-up problem for parabolic equations with nonlocal boundary condition
\begin{equation*}
    u(x,t)=\int_{\Omega}k(x,y,t)u^l(y,t)\,dy,\;x\in\partial\Omega,\;t>0,
\end{equation*}
was considered in~\cite{GK,CYZ,FZ2,GG,GG1,GK4,Liu,HJF,G0}. Initial boundary value problems for parabolic equations with
nonlocal boundary condition (\ref{v:g}) were addressed in many papers also (see, for example, \cite{GK1,GK2,LLLW,LWSL,LHZ,G3,G4}).
So, the problem~(\ref{v:u})--(\ref{v:n}) with $\mu = 1$ was studied in~\cite{G1,G2}.
Uniqueness and blow-up problems for porous medium equation with absorption and local nonlinear boundary condition were analyzed in~\cite{AMTR}.

The aim of this paper is to investigate global existence and blow-up of solutions of~(\ref{v:u})--(\ref{v:n}).

This paper is organized as follows. In the next section we prove local existence of solutions. A comparison principle we establish in Section 3. 
General analysis of the blow-up problem we give in the last two sections. The global existence of solutions for any initial data is proved in Section 4. 
In Section 5 we present finite time blow-up results.


\section{Local existence}\label{le}
In this section a local existence of solutions of (\ref{v:u})--(\ref{v:n}) will be proved. 
We begin with definitions of a supersolution, a subsolution and a solution of~(\ref{v:u})--(\ref{v:n}). 
Let $Q_T=\Omega\times(0,T),\,S_T=\partial\Omega\times(0,T), \, T > 0.$
\begin{definition}\label{Sol}
We say that a nonnegative function $u(x,t)\in C([0, T]; L^1 (\Omega)) \cap L^\infty (Q_T)$ is a supersolution
of~(\ref{v:u})--(\ref{v:n}) in $Q_{T}$ if
\begin{equation*}
\int_{\Omega} u(x,t) \varphi (x,t) dx \geq  \int_{\Omega} u_0 (x) \varphi (x,0)  dx  
 + \int_0^t\int_{\partial\Omega} \varphi (x,\tau) \int_\Omega k(x,y,\tau) u^l(y,\tau) dy  dS_x d\tau 
 \end{equation*}
\begin{equation}\label{1.1}
+ \int_0^t\int_{\Omega} \left[ u (x,\tau) \varphi_{\tau} (x,\tau) + u^\mu (x,\tau) \Delta \varphi (x,\tau) - a u^\nu \varphi (x,\tau) \right] 
dx d\tau 
\end{equation}
for every $t \in (0, T]$ and every nonnegative function $\varphi (x,t) \in C^{2,1}(Q_T)\cap C^{1,0}(\overline {Q_T})$ such that
$\varphi_t, \Delta \varphi \in  L^2 (Q_T)$ and  
$\frac{\partial \varphi (x,t)}{\partial \nu} = 0$ for 
$(x,t)\in S_T.$ A nonnegative function $u(x,t)\in  C([0, T]; L^1 (\Omega)) \cap L^\infty (Q_T)$ is called a
subsolution of~(\ref{v:u})--(\ref{v:n}) in $Q_{T}$ if 
it satisfies~(\ref{1.1}) in the reverse order. We say that $u(x,t)$ is a solution of
problem~(\ref{v:u})--(\ref{v:n}) in $Q_T$ if $u(x,t)$ is both a
subsolution and a supersolution of~(\ref{v:u})--(\ref{v:n}) in $Q_{T}.$
\end{definition}

From \cite{GSib,B} we immediately infer 

\begin{lemma}\label{L1}
There exists a sequence of positive functions $u_{0m} (x) \in L^\infty (\Omega),\,$ $m \in \mathbb{N},$ 
possessing the following properties: 
\begin{equation*}
u_{0(m+1)} (x) \le u_{0m} (x) \,\, \textrm{and} \,\, u_{0m} (x) \to u_{0} (x)  \,\, \textrm{as} \,\, m \to \infty
 \,\, \textrm{almost everywhere (a.e.) in} \,\,  \Omega,
 \end{equation*}
\begin{equation*}
\frac{1}{m} \le u_{0m} (x) \,\, \textrm{a.e. in} \,\,  \Omega.
\end{equation*}
For every $m \in \mathbb{N}$  there is a sequence of positive functions $u_{0mj} (x) \in C (\Omega),\,$ $j \in \mathbb{N},$ 
 satisfying the conditions 
\begin{equation*}
u_{0(m+1)j} (x) \le u_{0mj} (x)  \le u_{0m(j+1)} (x) \,\, \textrm{in} \,\,  \Omega,
\end{equation*}
\begin{equation*}
 u_{0mj} (x) \to u_{0m} (x)  \,\, \textrm{as} \,\, j \to \infty
 \,\, \textrm{a.e. in} \,\,  \Omega,
\end{equation*}
\begin{equation*}
\frac{1}{m} \le u_{0mj} (x)  \,\, \textrm{in} \,\,  \Omega.
\end{equation*}

\end{lemma}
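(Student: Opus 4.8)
The plan is to build the family in two stages: first the single-indexed sequence $u_{0m}$, then, for each fixed $m$, the continuous approximants $u_{0mj}$. Fix $M>0$ with $0\le u_0\le M$ a.e.\ in $\Omega$. The point to watch is that a bare truncation such as $\max\{u_0,1/m\}$ need not be an increasing a.e.\ limit of continuous functions, so I would first replace $u_0$ by lower semicontinuous minorants from above. By the Vitali--Carath\'eodory theorem (equivalently, by outer regularity of Lebesgue measure), for each $m\in\mathbb N$ there is a lower semicontinuous, bounded-below function $h_m$ on $\Omega$ with $h_m\ge u_0$ a.e.\ and $\int_\Omega(h_m-u_0)\,dx<2^{-m}$. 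Since a finite minimum of lower semicontinuous functions is lower semicontinuous, $\tilde h_m:=\min\{h_1,\dots,h_m,M+1\}$ is lower semicontinuous, finite, nonincreasing in $m$, satisfies $u_0\le\tilde h_m\le M+1$ a.e., and $\int_\Omega(\tilde h_m-u_0)\,dx<2^{-m}$; by monotone convergence its pointwise decreasing limit equals $u_0$ a.e. Then $u_{0m}:=\max\{\tilde h_m,1/m\}$ is lower semicontinuous and bounded (hence lies in $L^\infty(\Omega)$), positive, nonincreasing in $m$, bounded below by $1/m$, and tends to $u_0$ a.e.

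For the second stage I would take the Moreau--Yosida (inf-convolution) approximants of $u_{0m}$ from below:
\[
u_{0mj}(x):=\inf_{y\in\Omega}\bigl(u_{0m}(y)+j\,|x-y|\bigr),\qquad x\in\Omega,\ j\in\mathbb N.
\]
Each $u_{0mj}$ is $j$-Lipschitz, hence belongs to $C(\Omega)$, and since $u_{0m}\ge 1/m$ on $\Omega$ the quantity under the infimum is $\ge 1/m$, so $u_{0mj}\ge 1/m>0$. The map $j\mapsto u_{0m}(y)+j|x-y|$ is nondecreasing, whence $u_{0mj}\le u_{0m(j+1)}$; and $u_{0(m+1)}\le u_{0m}$ pointwise gives, after taking infima, $u_{0(m+1)j}\le u_{0mj}$, so the interlacing inequality $u_{0(m+1)j}\le u_{0mj}\le u_{0m(j+1)}$ comes for free. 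Finally, the standard fact that the inf-convolution of a lower semicontinuous, bounded-below function increases pointwise to that function gives $u_{0mj}\to u_{0m}$ everywhere as $j\to\infty$, in particular a.e.\ in $\Omega$.

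The only genuinely nontrivial point is the choice of $u_{0m}$ as lower semicontinuous rather than as a plain truncation of $u_0$ in the first stage; once that is arranged the monotone inf-convolution settles everything else routinely. Of course, as indicated just before the statement, both stages are already contained in \cite{GSib,B}, so one may alternatively quote the result directly.
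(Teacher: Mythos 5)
Your construction is correct. Note, however, that the paper does not actually prove this lemma at all: it is stated with the remark that it follows ``immediately'' from the cited works of Gladkov and Bertsch, so there is no internal argument to compare against. Your proposal therefore supplies something the paper omits, namely a self-contained construction, and it does so soundly. The one genuinely delicate point --- that a plain truncation $\max\{u_0,1/m\}$ of a merely measurable $u_0$ need not be an a.e.\ increasing limit of continuous functions, so one must first pass to lower semicontinuous majorants (Vitali--Carath\'eodory), cap them by $M+1$ to keep them finite and in $L^\infty$, and take running minima to force monotonicity in $m$ --- is correctly identified and correctly handled; an increasing everywhere-limit of continuous functions is lower semicontinuous, so some such regularization is unavoidable. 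The second stage via inf-convolution $u_{0mj}(x)=\inf_{y\in\Omega}\bigl(u_{0m}(y)+j|x-y|\bigr)$ then delivers the $j$-Lipschitz continuity, the lower bound $1/m$, the monotonicity in $j$, the monotonicity in $m$ (since $u_{0(m+1)}\le u_{0m}$ holds everywhere by construction of the running minima, not merely a.e.), and the pointwise convergence to the lower semicontinuous $u_{0m}$, which together give exactly the interlacing inequalities and convergence claimed in the lemma. In short: the argument is complete and, if anything, more informative than the paper's citation-only treatment.
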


\begin{theorem}\label{LE} 
 Problem (\ref{v:u})--(\ref{v:n}) has a solution in $Q_{T}$ for small values of $T.$
\end{theorem}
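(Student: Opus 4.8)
The plan is to construct the solution as a monotone limit of classical solutions of regularized problems. First I would use Lemma~\ref{L1} to approximate the initial datum $u_0$ from above by smooth positive functions $u_{0mj}$ bounded below by $1/m$. For each such smooth, strictly positive datum, the equation $u_t = \Delta u^\mu - au^\nu$ is uniformly parabolic (since $u^{\mu-1}$ is bounded below away from zero on any time interval where $u$ stays positive), so classical parabolic theory---local existence for quasilinear parabolic equations with a nonlinear (nonlocal) Neumann-type boundary condition---gives a unique classical solution $u_{mj}$ on some cylinder $Q_{T_{mj}}$. The nonlocal boundary term $\int_\Omega k(x,y,t)u^l(y,t)\,dy$ is, for bounded $u$, a bounded and (in $u$) locally Lipschitz functional, so a fixed-point argument in a suitable Banach space (e.g. iterating the heat semigroup with Neumann boundary data) yields the classical solution; one checks that $T_{mj}$ can be chosen uniformly in $m,j$ because the a priori $L^\infty$ bound depends only on $\|u_0\|_{L^\infty}$, $\|k\|_{L^\infty}$ on the relevant compact sets, and $\mu,\nu,l$. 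Fix such a common $T$.

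Next I would establish monotonicity of the approximating sequences via the comparison principle for the \emph{nondegenerate} problems (which is classical here, since we are comparing strictly positive classical solutions): from $u_{0(m+1)j}\le u_{0mj}\le u_{0m(j+1)}$ and the monotonicity of $k$ and the structure of the boundary condition, one gets $u_{(m+1)j}\le u_{mj}\le u_{m(j+1)}$ on $Q_T$. Also each $u_{mj}\ge$ (the solution with datum $1/m$), which stays positive; and each $u_{mj}$ is bounded above by a fixed constant $M$ depending only on the data. Hence the limits $u_m = \lim_{j\to\infty} u_{mj}$ and then $u = \lim_{m\to\infty} u_m$ exist pointwise a.e., are nonnegative, and satisfy $0\le u\le M$, so $u\in L^\infty(Q_T)$.

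Then I would pass to the limit in the weak (integral) formulation. Each classical solution $u_{mj}$ satisfies the integral identity~(\ref{1.1}) with equality (integrate the PDE against a test function $\varphi$ with $\partial\varphi/\partial\nu=0$ and integrate by parts twice, the boundary term producing exactly the nonlocal flux integral). The uniform $L^\infty$ bound on $u_{mj}$ and hence on $u_{mj}^\mu$, $u_{mj}^\nu$, $u_{mj}^l$, together with dominated convergence, lets me pass $j\to\infty$ and then $m\to\infty$ in every term of~(\ref{1.1}): the terms involving $u^\mu\Delta\varphi$, $u\varphi_\tau$, $au^\nu\varphi$, and the nonlocal boundary term all converge because the integrands converge a.e. and are dominated by fixed $L^1$ functions (using $\varphi_\tau,\Delta\varphi\in L^2(Q_T)\subset L^1(Q_T)$ and $k\in L^\infty_{loc}$). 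The continuity-in-time requirement $u\in C([0,T];L^1(\Omega))$ follows from the uniform bound plus the integral identity (the map $t\mapsto\int_\Omega u\varphi\,dx$ is continuous for all admissible $\varphi$, and with $L^\infty$ bounds this upgrades to $C([0,T];L^1)$), or alternatively from equicontinuity estimates on the $u_{mj}$.

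The main obstacle I anticipate is twofold: first, setting up the local classical theory for the regularized problem with a \emph{nonlinear nonlocal Neumann} boundary condition---one must verify that the fixed-point / continuation argument genuinely produces a classical solution on a time interval \emph{independent of $m$ and $j$}, which requires the a priori sup-bound to be uniform and to not deteriorate as $u_{0m}\to u_0$ from above (this is where keeping the bound $1/m\le u_{0m}$ for nondegeneracy, but $\|u_{0m}\|_\infty\le\|u_0\|_\infty+1$ for the upper bound, matters). Second, justifying that the monotone pointwise limit actually inherits the integral identity in \emph{both} directions (super- and sub-solution), i.e. that no mass is lost at the boundary in the limit---this is handled by the uniform $L^\infty$ control, but care is needed since the convergence is only a.e. and the boundary integral involves traces; here one uses that in the weak formulation~(\ref{1.1}) the boundary term is written as an integral of $u^l(y,\tau)$ over $\Omega$ (not over $\partial\Omega$), so only interior a.e. convergence of $u_{mj}$ is needed, which sidesteps any trace difficulty.
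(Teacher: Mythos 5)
Your overall strategy --- monotone approximation from above via Lemma~\ref{L1}, comparison for the nondegenerate approximating problems, and passage to the limit in the integral identity --- is the same as the paper's, but two technical devices the paper relies on are missing, and at least one of them is needed for the argument to go through. First, you keep the unregularized equation $u_t=\Delta u^{\mu}-au^{\nu}$ for the approximations and claim uniform parabolicity ``on any time interval where $u$ stays positive.'' Positivity is exactly what is at stake: the constant $1/m$ is not a subsolution of the unregularized equation (that would require $a/m^{\nu}\le 0$), and for $\nu<1$ the spatially homogeneous barrier solving $v'=-av^{\nu}$, $v(0)=1/m$, vanishes at time $m^{\nu-1}/\bigl(a(1-\nu)\bigr)$, which tends to $0$ as $m\to\infty$. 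So there is no uniform-in-$m$ lower bound keeping the problem nondegenerate, and the classical theory you invoke may fail. The paper avoids this by inserting the source term $a/m^{\nu}$ into the approximating equation (see (\ref{l1})), which makes $1/m$ an exact steady subsolution and guarantees $u_{mj}\ge 1/m$ throughout $Q_T$.

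Second, you assert rather than establish the two quantitative facts on which everything rests: (i) classical solvability of the fully coupled quasilinear problem with the nonlinear nonlocal Neumann condition, and (ii) an $L^{\infty}$ bound and an existence time uniform in $m,j$. For (i) the paper sidesteps the coupled problem entirely by iterating in $j$: the boundary flux for $u_{mj}$ is computed from the previous iterate $u_{m(j-1)}$, so each approximating problem is a standard quasilinear Neumann problem with \emph{known} boundary data, and the fixed point is reached as the monotone limit in $j$. For (ii) the paper constructs the explicit supersolution $w(x,t)=[1-\alpha(\mu-1)t]^{-1/(\mu-1)}\zeta(x)$, with $\zeta$ chosen so that $w$ dominates all iterates up to $T=1/[2\alpha(\mu-1)]$; this is where the ``small $T$'' in the statement actually comes from, and without such a barrier the claim that $T_{mj}$ can be chosen uniformly is unsupported (for $l+\mu>2$ solutions genuinely can blow up, so no unconditional a priori sup bound exists). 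Finally, your argument for $u\in C([0,T];L^1(\Omega))$ is only sketched; the paper obtains it by integrating the approximating equation over $Q_t$ and showing that $u_m$ is a Cauchy sequence in $C([0,T];L^1(\Omega))$.
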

\begin{proof}

From Lemma~\ref{L1} it is easy to get that
\begin{equation*} \label{l0}
 u_{0mj} (x) \le \esssup_{\Omega} u_{01} (x), \, m,j \in \mathbb{N}.
\end{equation*}
We put $u_{m0} (x, t) \equiv 0$ and consider for $m,j \in \mathbb{N}$ the following initial boundary value problem
\begin{eqnarray} \label{l1}
\left\{ \begin{array}{ll}
 L_m u_{mj} \equiv u_{mjt} - \Delta u_{mj}^\mu + a u_{mj}^\nu  - a /m^\nu = 0
 \,\,\,&\textrm{for} \,\,\, (x,t) \in Q_T, \\
\frac{\partial u_{mj}(x,t)}{\partial\bf{n}} = \int_{\Omega} k(x,y,t) u_{m(j-1)}^l (y,t) dy \,\,\,
& \textrm{for} (x,t) \in S_T,  \\
u_{mj} (x,0)= u_{0mj} (x) \,\,\,& \textrm{for} \,\,\, x \in \Omega.
\end{array} \right.
\end{eqnarray}
It is well known that problem (\ref{l1}) has a classical solution.

Let us consider the following auxiliary function 
\begin{equation*} \label{l2}
w (x, t) = \left[ 1 - \alpha (\mu - 1) t \right]^{- \frac{1}{\mu - 1}} \zeta (x),
\end{equation*}
where
\begin{equation*} \label{l3}
\zeta (x) \in C^2(\overline\Omega), \; \inf_{\Omega} \zeta (x) \ge \max \{1, \esssup_{\Omega} u_{01} (x) \}, \;  
\alpha > \sup_{\Omega} \frac{|\Delta \zeta^\mu |}{\zeta},
\end{equation*}
\begin{equation*} \label{l4}
\inf_{\partial \Omega} \frac{\partial \zeta (x)}{\partial \bf{n}} \geq  
\max \{1, 2^{(l-1)/(\mu -1)} \} \esssup_{\partial\Omega \times \Omega \times [0, 1/\{ 2 \alpha (\mu - 1) \}]} k (x, y, t)  \int_{\Omega} \zeta^l(y) \, dy.  
\end{equation*}
It is easy to check that $\underline{u} (x,t) = 1/m$ and $w (x, t)$ are subsolution and supersolution of (\ref{l1}) for $j =1$ 
in $Q_T$ with $T = 1/[2 \alpha (\mu - 1)],$ respectively. By a comparison principle for (\ref{l1}) we have
\begin{equation*} \label{l5}
\frac{1}{m} \le u_{m1} (x, t) \le w (x, t)\,\,\, \textrm{in} \,\,\, Q_T.
\end{equation*}
Then using the induction on $j$ and a comparison principle for (\ref{l1}) we deduce
\begin{equation} \label{l6}
\frac{1}{m} \le u_{mj} (x, t) \le w (x, t)\,\,\, \textrm{in} \,\,\, Q_T, j \in \mathbb{N}.
\end{equation}
Obviously, 
\begin{equation*} 
u_{m1} (x, t) \ge u_{m0} (x, t)\,\,\, \textrm{in} \,\,\, Q_T \,\,\, \textrm{for} \,\,\, m \in \mathbb{N}.
\end{equation*}
Let
\begin{equation}\label{l7} 
u_{mj} (x, t) \ge u_{m(j-1)} (x, t) \,\,\, \textrm{in} \,\,\, Q_T \,\,\, \textrm{for} \,\,\, m \in \mathbb{N} \,\,\, \textrm{and for some} \,\,\, j \in \mathbb{N}.
\end{equation}
Using (\ref{l1}), (\ref{l7}) and Lemma~\ref{L1}  we obtain
\begin{equation*} 
L_m u_{m(j+1)} (x, t) = L_m u_{mj} (x, t) = 0 \,\,\, \textrm{in} \,\,\, Q_T,
\end{equation*}
\begin{equation*} 
\frac{\partial u_{m(j+1)}(x,t)}{\partial\bf{n}} \ge \frac{\partial u_{mj}(x,t)}{\partial\bf{n}} \,\,\, \textrm{on} \,\,\, S_T,
\end{equation*}
\begin{equation*} 
 u_{0m(j+1)} (x) \ge  u_{0mj} (x)  \,\, \textrm{in} \,\,  \Omega.
\end{equation*}
Now by a comparison principle  we have
\begin{equation*}\label{l8} 
u_{m(j+1)} (x, t) \ge u_{mj} (x, t) \,\,\, \textrm{in} \,\,\, Q_T \,\,\, \textrm{for} \,\,\, m, j \in \mathbb{N}.
\end{equation*}

We note that
\begin{equation*} 
L_m u_{m1} (x, t) = 0, \; L_m u_{(m+1)1} (x, t) =  L_{m+1} u_{(m+1)1} (x, t)  - \frac{a}{m^\nu}  + \frac{a}{(m + 1)^\nu} 
 \le 0 \,\,\, \textrm{in} \,\,\, Q_T,
\end{equation*}
\begin{equation*} 
\frac{\partial u_{(m+1)1}(x,t)}{\partial\bf{n}} = \frac{\partial u_{m1}(x,t)}{\partial\bf{n}} = 0 \,\,\, \textrm{on} \,\,\, S_T,
\end{equation*}
\begin{equation*} 
 u_{0(m+1)1} (x) \le  u_{0m1} (x)  \,\, \textrm{in} \,\,  \Omega.
\end{equation*}
Then by a comparison principle  we obtain
\begin{equation*} 
u_{(m+1)1} (x, t) \le u_{m1} (x, t) \,\,\, \textrm{in} \,\,\, Q_T \,\,\, \textrm{for} \,\,\, m \in \mathbb{N}.
\end{equation*}
In a similar way as above using the induction on $j$ and a comparison principle we deduce
\begin{equation}\label{l9} 
u_{(m+1)j} (x, t) \le u_{mj} (x, t) \,\,\, \textrm{in} \,\,\, Q_T \,\,\, \textrm{for} \,\,\, m, j \in \mathbb{N}.
\end{equation}

 Multiplying the first equation in (\ref{l1}) by $\varphi (x,t)$ from Definition~\ref{Sol} and then integrating over $Q_t$  for $t \in (0, T],$ we get
\begin{eqnarray*}
\int_{\Omega} u_{mj} (x,t) \varphi (x,t) \, dx &=& \int_0^t\int_{\Omega} \left[ u_{mj} \varphi_{\tau}  + u_{mj}^\mu  \Delta \varphi - a u_{mj}^\nu \varphi 
+  \frac{a}{m^\nu} \varphi  \right] \, dx d\tau \nonumber\\
&+& \int_0^t\int_{\partial\Omega} \varphi (x,\tau) \int_\Omega k(x,y,\tau) u_{m(j-1)}^l(y,\tau) \, dy  dS_x d\tau \nonumber\\
&+&   \int_{\Omega} u_{0mj} (x) \varphi (x,0) \, dx.
\end{eqnarray*}
 Since the sequence $u_{mj} (x, t)$ is monotone on $j$ and bounded, we may define 
 \begin{equation}\label{l91}
 u_{m} (x, t) = \lim_{j \to \infty} u_{mj} (x, t),
\end{equation}
and it is easy to see that $u_{m} (x, t)$ satisfies the following equation
\begin{eqnarray}\label{l10}
\int_{\Omega} u_{m} (x,t) \varphi (x,t) \, dx &=& \int_0^t\int_{\Omega} \left[ u_{m} \varphi_{\tau}  + u_{m}^\mu  \Delta \varphi - a u_{m}^\nu \varphi 
+  \frac{a}{m^\nu} \varphi  \right] \, dx d\tau
\nonumber\\
&+& \int_0^t\int_{\partial\Omega} \varphi (x,\tau) \int_\Omega k(x,y,\tau) u_{m}^l(y,\tau) \, dy  dS_x d\tau 
\nonumber\\
&+&   \int_{\Omega} u_{0m} (x) \varphi (x,0) \, dx.
\end{eqnarray}

Moreover, from (\ref{l6}), (\ref{l9}), (\ref{l91}) we have
\begin{equation}\label{l12} 
\frac{1}{m} \le u_{m} (x, t) \le w (x, t), \,\,\,  u_{m+1} (x, t) \le u_{m} (x, t) \,\,\, \textrm{in} \,\,\, Q_T \,\,\, \textrm{for} \,\,\, m \in \mathbb{N}.
\end{equation}
Now we define
\begin{equation}\label{l13} 
u (x, t) = \lim_{m \to \infty} u_{m} (x, t)
\end{equation}
and prove that  $u(x,t)\in C([0, T]; L^1 (\Omega)).$ To show this we integrate the first equation in
 (\ref{l1})  over $Q_t$  for $t \in (0, T]$ to obtain
\begin{eqnarray}\label{l15} 
\int_{\Omega} u_{mj} (x,t) \, dx &=& \int_0^t\int_{\Omega} \left[ \frac{a}{m^\nu} - a u_{mj}^\nu  \right] \, dx d\tau \nonumber\\
&+& \int_0^t\int_{\partial\Omega} \int_\Omega k(x,y,\tau) u_{m(j-1)}^l(y,\tau) \, dy  dS_x d\tau \nonumber\\
&+&   \int_{\Omega} u_{0mj} (x) \, dx.
\end{eqnarray}
Substracting from (\ref{l15}) the similar equality with $m = k \, (k > m)$ and taking (\ref{l9}) into account, we get 
\begin{eqnarray}\label{l16} 
&&\int_{\Omega} [ u_{mj} (x,t) -  u_{kj} (x,t) ] \, dx \le a T |\Omega| \left( \frac{1}{m^\nu} - \frac{1}{k^\nu} \right) +  \int_{\Omega} [ u_{0mj} (x) -  u_{0kj} (x) ]  \, dx
 \nonumber\\
&+& |\partial\Omega| \esssup_{\partial\Omega \times \Omega \times [0, T]} k (x, y, t)  \int_0^T \int_\Omega [ u_{m(j-1)}^l(y,\tau) - u_{k(j-1)}^l(y,\tau) ]\, dy d\tau, 
\end{eqnarray}
where $\vert \partial \Omega\vert$ and $\vert \Omega\vert $ are
the Lebesgue measures of $\partial \Omega$ in $\mathbb R^{N-1}$ and
$\Omega$ in $\mathbb R^N,$ respectively. Passing to the limit in (\ref{l16}) as $j \to \infty,$ 
by virtue of (\ref{l6}), (\ref{l91}) -- (\ref{l13}) and  Lemma~\ref{L1}, we infer 
\begin{equation*}\label{l17} 
\lim_{m \to \infty} \sup_{[0, T]} || u_{m} (x, t) - u_{k} (x, t) ||_{L^1 (\Omega)} = 0.
\end{equation*}
Thus, $u_{m} $ is a Cauchy sequence in $C([0, T]; L^1 (\Omega))$ and the limit function $u $ is continuous in 
$L^1 (\Omega).$ 
Now passing to the limit in (\ref{l10}) as $m \to \infty,$ we prove that $u(x,t)$ is a solution of~(\ref{v:u})--(\ref{v:n}) in $Q_{T}.$ 
\end{proof}


\section{Comparison principle}\label{cp}

In this section a comparison principle
for~(\ref{v:u})--(\ref{v:n}) will be proved.
\begin{theorem}\label{Th3} Let $\overline{u}$ and $\underline{u}$ be a
 supersolution and a  subsolution of problem
(1.1)--(1.3) in $Q_T,$ respectively.  Suppose that $\max \{\underline{u}(x,t),\overline{u}(x,t) \} \ge \delta $ 
for some positive constant $\delta,$   a.e. in $Q_T$ if $ l < 1.$ Then $ \overline{u}(x,t) \geq
\underline{u}(x,t) $ a.e. in ${Q}_T.$
\end{theorem}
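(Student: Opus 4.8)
The plan is to argue by a duality/test-function method adapted to the weak formulation in Definition~\ref{Sol}. Set $w = \underline{u} - \overline{u}$ and, using the integral inequalities satisfied by $\underline{u}$ and $\overline{u}$, subtract them to obtain, for every admissible test function $\varphi$ and every $t \in (0,T]$,
\begin{equation*}
\int_{\Omega} w(x,t)\varphi(x,t)\,dx \le \int_0^t\int_{\Omega} \left[ w\varphi_\tau + (\underline{u}^\mu - \overline{u}^\mu)\Delta\varphi - a(\underline{u}^\nu - \overline{u}^\nu)\varphi \right] dx\,d\tau + \int_0^t\int_{\partial\Omega}\varphi \int_\Omega k(x,y,\tau)(\underline{u}^l - \overline{u}^l)\,dy\,dS_x\,d\tau.
\end{equation*}
The standard device is to linearize: write $\underline{u}^\mu - \overline{u}^\mu = A(x,\tau) w$ and $\underline{u}^\nu - \overline{u}^\nu = B(x,\tau) w$ and $\underline{u}^l - \overline{u}^l = C(x,\tau) w$, where $A$ is a bounded nonnegative measurable coefficient (bounded below away from $0$ only where both functions are positive), and $B$, $C$ are bounded (here the hypothesis $\underline{u} > 0$ or $\overline{u} > 0$ a.e. when $l < 1$ is exactly what keeps $C$ bounded; when $\nu \ge 1$ or $\mu \ge 1$ the corresponding coefficients are automatically bounded on $Q_T$ since the solutions are in $L^\infty$). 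One then wants to choose $\varphi$ to solve, approximately, the backward-in-time adjoint problem
\begin{equation*}
\varphi_\tau + A\,\Delta\varphi - aB\varphi = -\psi \ \ \text{in } Q_t, \qquad \frac{\partial\varphi}{\partial\mathbf{n}} = 0 \ \ \text{on } S_t, \qquad \varphi(\cdot,t) = 0,
\end{equation*}
for an arbitrary nonnegative $\psi \in C_0^\infty(Q_t)$, so that all terms except $-\int\!\!\int \psi w$ and the boundary term collapse; positivity of $\varphi$ (by the maximum principle for this linear parabolic equation, using $A \ge 0$) then forces $\int\!\!\int \psi\, w \le (\text{boundary contribution})$, and since $\psi$ is arbitrary one gets pointwise control of $w^+$.

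Because $A$ may vanish (degeneracy of the porous medium operator) the adjoint problem is not uniformly parabolic, so I would follow the usual regularization: replace $A$ by $A_\varepsilon = A + \varepsilon$, smooth the coefficients, solve the nondegenerate adjoint problem to get a smooth positive $\varphi_\varepsilon$ with good a priori bounds ($0 \le \varphi_\varepsilon \le C$, plus the $L^2$ bounds on $\varphi_{\varepsilon\tau}$ and $\Delta\varphi_\varepsilon$ required by Definition~\ref{Sol}, obtained by multiplying the adjoint equation by $\Delta\varphi_\varepsilon$ and integrating), and then pass to the limit $\varepsilon \to 0$. The error term coming from $(A - A_\varepsilon)\Delta\varphi_\varepsilon = -\varepsilon\Delta\varphi_\varepsilon$ is controlled in $L^2(Q_t)$ and tends to zero. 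This yields
\begin{equation*}
\int_0^t\int_{\Omega} \psi\, w^+ \,dx\,d\tau \le \int_0^t\int_{\partial\Omega} \varphi\,\Big(\int_\Omega k(x,y,\tau)\, C(y,\tau)\, w(y,\tau)\,dy\Big)^+ dS_x\,d\tau \le M \int_0^t \|w^+(\cdot,\tau)\|_{L^1(\Omega)}\,d\tau
\end{equation*}
for a constant $M$ depending on $\|k\|_\infty$, $\|C\|_\infty$, $|\partial\Omega|$ and the uniform bound on $\varphi$; here I use $w \le w^+$ and nonnegativity of $k$.

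Taking the supremum of $\psi$ (or choosing $\psi$ approximating the sign of $w^+$) upgrades the left side to $\|w^+(\cdot,\tau)\|_{L^1(\Omega)}$ integrated appropriately, and one arrives at
\begin{equation*}
\|w^+(\cdot,t)\|_{L^1(\Omega)} \le M \int_0^t \|w^+(\cdot,\tau)\|_{L^1(\Omega)}\,d\tau, \qquad t \in (0,T],
\end{equation*}
whence Gronwall's inequality gives $w^+ \equiv 0$, i.e. $\overline{u} \ge \underline{u}$ a.e. in $Q_T$. The main obstacle is the degeneracy: handling the case where $A$ vanishes on a set of positive measure (which is unavoidable since $\underline{u}$ or $\overline{u}$ may be zero), making the adjoint problem genuinely degenerate parabolic. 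This is exactly why the regularization $A_\varepsilon = A + \varepsilon$ is needed, together with careful $\varepsilon$-uniform estimates on $\varphi_\varepsilon$ so that the limit test function is admissible in the sense of Definition~\ref{Sol}; the low-exponent case $l < 1$ (and similarly $\nu < 1$, though $\mu > 1$ is assumed so the absorption exponent issue only bites if $\nu<1$) is precisely where the extra positivity hypothesis is invoked to keep the linearizing coefficient $C$ (and $B$) bounded. A secondary technical point is that the test functions are only $C^{1,0}$ up to the boundary with $L^2$ second derivatives, so the adjoint solutions must be shown to lie in this class with norms uniform in the regularization parameters.
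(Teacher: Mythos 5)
Your overall strategy (duality with a backward linearized adjoint problem, smoothing of coefficients, Gronwall) is the same family of argument as the paper's, but there is a genuine gap at the crucial step, and it is exactly the step where the paper's proof is structured differently. You compare $\underline{u}$ and $\overline{u}$ directly and handle the degeneracy of the linearized diffusion coefficient $A=(\underline{u}^\mu-\overline{u}^\mu)/(\underline{u}-\overline{u})$ by the artificial regularization $A_\varepsilon=A+\varepsilon$, asserting that the error term $\varepsilon\int_0^t\int_\Omega \Delta\varphi_\varepsilon\, w\,dx\,d\tau$ tends to zero. This is not justified by the estimates you have available: multiplying the adjoint equation by $\Delta\varphi_\varepsilon$ gives $\int_0^t\int_\Omega A_\varepsilon(\Delta\varphi_\varepsilon)^2\,dx\,d\tau \le \tfrac12\int_\Omega|\nabla\psi|^2dx + a^2\int_0^t\int_\Omega B^2\varphi_\varepsilon^2 A_\varepsilon^{-1}dx\,d\tau$, and on the set where $A$ vanishes the last integral is of order $1/\varepsilon$; hence $\|\varepsilon\Delta\varphi_\varepsilon\|_{L^2(Q_t)}\le \sqrt{\varepsilon}\,\bigl(\int_0^t\int_\Omega A_\varepsilon(\Delta\varphi_\varepsilon)^2dx\,d\tau\bigr)^{1/2}$ is merely bounded, not vanishing. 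The regularization parameter is doing double duty as the lower bound of the coefficient and as the size of the error, and the two cancel. A second, related gap: for $l\ge1$ the theorem imposes no positivity, so when $\nu<1$ your coefficient $B=(\underline{u}^\nu-\overline{u}^\nu)/(\underline{u}-\overline{u})$ is unbounded where both functions are small; you flag this but do not resolve it.

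The paper circumvents both problems by not comparing $\underline{u}$ and $\overline{u}$ directly. It proves $\underline{u}\le u\le\overline{u}$, where $u$ is the solution built in Theorem~\ref{LE} as the limit of the approximations $u_m$ satisfying $u_m\ge 1/m$. Linearizing between $u_m$ and $\overline{u}$ gives a coefficient $a_m\ge r_m>0$ (using $\mu>1$ and $\max(u_m,\overline{u})\ge 1/m$) and a bounded $b_m\le M_m$, so the adjoint problem is uniformly parabolic with constants depending only on $m$; the coefficients are then smoothed by $a_{mk},b_{mk}$ with the \emph{same} lower bound $r_m$, and the smoothing error vanishes as $k\to\infty$ at fixed $m$ because $\|a_m-a_{mk}\|_{L^2}\to0$ while $r_m$ stays fixed. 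The price is the extra term $\frac{a}{m^\nu}T|\Omega|$ in the Gronwall inequality, which disappears when $m\to\infty$ at the very end. To repair your proof you would either have to adopt this detour through the strictly positive approximations, or produce an $\varepsilon$-uniform estimate on $\|\Delta\varphi_\varepsilon\|_{L^2}$ that your current energy argument does not give.
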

\begin{proof}
Suppose that $l \geq 1.$  Let $u_m(x,t)$ is defined in  (\ref{l91}). Then it satisfies (\ref{l12}). 
 To establish theorem we will show that
\begin{equation}\label{3.1}
\underline{u}(x,t) \leq u(x,t) \leq \overline{u}(x,t) \,\,\,
 \textrm{a.e. in} \,\,\, Q_T,
 \end{equation}
where $u(x,t)$ was defined in  (\ref{l13}). 
We prove the second inequality in (\ref{3.1}) only, since the
proof of the first one is similar. Let $\varphi (x,t) \in
C^{2,1}(\overline{Q_T})$ be a nonnegative function such that
$$
\frac{\partial \varphi (x,t)}{\partial \bf{n}} = 0, \; (x,t) \in
S_T.
$$
Set
$w(x,t)=u_m (x,t) - \overline{u}(x,t).$ Then $w(x,t)$
satisfies
\begin{eqnarray}\label{5.1}
\int_\Omega w(x,t)\varphi (x,t)\, dx &\leq&
\int_\Omega w(x,0)\varphi (x,0)\, dx +  \frac{a}{m^\nu} \int_0^t\int_{\Omega}  \varphi (x,\tau)  \, dx d\tau 
\nonumber\\
&+& \int_0^t\int_{\partial\Omega} \varphi (x,\tau) \int_\Omega
k(x,y,\tau) d_m (y,\tau) w(y,\tau) \, dy dS_x d\tau 
 \nonumber \\
 &+& \int_0^t\int_{\Omega} (\varphi_{\tau} + a_m \Delta \varphi - a b_m \varphi) w \, dx d\tau,
\end{eqnarray}
where 
\begin{eqnarray*}\label{6.1}
a_m = 
\left\{ \begin{array}{ll}
\frac{u_m^\mu - \overline{u}^\mu}{u_m - \overline{u}}, \, u_m \neq \overline{u},
 \\
 \null \\
\mu u_m^{\mu-1}, \, u_m = \overline{u},
\end{array} \right.
b_m = 
\left\{ \begin{array}{ll}
\frac{u_m^\nu - \overline{u}^\nu}{u_m - \overline{u}}, \, u_m \neq \overline{u},
 \\
 \null \\
\nu u_m^{\nu-1}, \, u_m = \overline{u},
\end{array} \right.
d_m = 
\left\{ \begin{array}{ll}
\frac{u_m^l - \overline{u}^l}{u_m - \overline{u}}, \, u_m \neq \overline{u},
 \\
 \null \\
l u_m^{l-1}, \, u_m = \overline{u}.
\end{array} \right.
\end{eqnarray*}
Note here that by
hypotheses for  $k(x,y,t)$, $u_m (x,t)$ and
$\overline{u}(x,t)$, we have
\begin{eqnarray}\label{7.1}
&&  0 \leq \overline{u}(x,t) \leq M,
\,\, \frac{1}{m} \leq u_m (x,t) \leq M, 
\nonumber \\
&& r_m \leq a_m (x,t) \leq M, \,\, r_m \leq d_m (x,t) \leq M, \,\, r_m \leq b_m (x,t) \leq M_m \,\,
\textrm{a. e. in} \,\,  Q_T, 
\nonumber \\
&&\textrm{and}\,\,\,0 \leq k(x,y,t) \leq M \,\,\, \textrm{a. e. in}\,\,\, 
\partial \Omega \times Q_T,
\end{eqnarray}
where $M,\,$ $r_m,\,$ $M_m\,$ are some positive constants, $r_m\,$ and $M_m\,$ may depend on $m.$ 
Let $\{a_{mk}\}, \{b_{mk}\}$ be sequences of functions with the following properties: 
$\, a_{mk} \in C^\infty (\overline{Q_T}),\,$ $\, b_{mk} \in C^\infty (\overline{Q_T}),\,$
\begin{equation}\label{8.0}
a_{mk} \to a_{m} \,\,\, \textrm{as}\,\,\,  k \to \infty  \,\,\, \textrm{in}\,\,\,  L^2({Q_T}),  \,\,\, 
b_{mk} \to b_{m}\,\,\, \textrm{as}\,\,\,   k \to \infty \,\,\, \textrm{in}\,\,\,  L^1({Q_T})
 \end{equation}
and
\begin{equation}\label{8.1}
r_m \leq a_{mk} (x,t) \leq M+1, \,\, r_m \leq b_{mk} (x,t) \leq M_m +1 \,\,
\textrm{ in} \,\,  \overline{Q_T}.
 \end{equation}

Now consider a backward problem given by
\begin{eqnarray}\label{9.1}
\left\{ \begin{array}{ll}
\varphi_{\tau} + a_{mk} \Delta \varphi - a b_{mk} \varphi = 0 \,\,\, & \textrm{for} \,\,\,x \in \Omega, \,\,\,
0<\tau<t, \\
\frac{\partial \varphi (x,\tau)}{\partial  \bf{n}} = 0 \,\,\,&
\textrm{for} \,\,\, x \in
\partial\Omega, \,\,\, 0 < \tau < t, \\
\varphi(x,t)= \psi (x) \,\,\,& \textrm{for} \,\,\,x \in \Omega,
\end{array} \right.
\end{eqnarray}
where $\psi (x)\in C_0^\infty (\Omega)$ and $ 0 \leq \psi (x) \leq
1.$ Denote a solution of (\ref{9.1}) as $\varphi_{mk} (x,\tau).$
Then by the standard theory for linear parabolic equations (see
\cite{LSU}, for example), we find that  $\varphi_{mk} \in
C^{2,1}(\overline{Q}_{t}),$  $0 \leq \varphi_{mk} (x,\tau) \leq 1$ in
$\overline{Q_t}.$  Putting $\varphi = \varphi_{mk}$ in
(\ref{5.1}), we infer
\begin{eqnarray}\label{10.1}
&& \int_\Omega w(x,t)\psi (x)\, dx \leq \int_\Omega w(x,0)_+ \, dx +  \frac{a}{m^\nu} T \vert \Omega \vert  
 +  \vert \partial\Omega \vert M^2  \int_0^t \int_\Omega w(y,\tau)_+ \, dy d\tau
 \nonumber\\
&+&  \int_0^t \int_\Omega \left\{ (a_{m} - a_{mk}) \Delta \varphi_{mk} - a  (b_{m} - b_{mk}) \varphi_{mk} \right\}  w(x,\tau) \, dx d\tau,
\end{eqnarray}
where $w_+ = \max \{w,0 \}.$ 

To estimate last integral in the right-hand side of (\ref{10.1}) 
 we multiply the equation in (\ref{9.1}) by $\Delta \varphi_{mk}$ and integrate the result over $Q_t$ 
\begin{eqnarray}\label{11.1}
 \int_0^t\int_{\Omega} a_{mk} (\Delta \varphi_{mk})^2 \, dx d\tau &=& - \int_0^t\int_{\Omega} \varphi_{mk\tau} \Delta \varphi_{mk} \, dx d\tau 
 + a \int_0^t\int_{\Omega} b_{mk} \varphi_{mk} \Delta \varphi_{mk} \, dx d\tau
  \nonumber\\
&\leq& \frac{1}{2} \int_0^t\int_{\Omega} \vert \nabla \psi (x) \vert^2 \, dx 
+ \frac{a^2 }{2}  \int_0^t\int_{\Omega} \frac{b_{mk}^2}{a_{mk}} \varphi_{mk}^2 \, dx d\tau 
 \nonumber\\
 &+& \frac{1}{2} \int_0^t\int_{\Omega} a_{mk} (\Delta \varphi_{mk})^2 \, dx d\tau. 
\end{eqnarray}
From  (\ref{11.1}) we conclude that 
 \begin{equation}\label{12.1}
\int_0^t\int_{\Omega} a_{mk} (\Delta \varphi_{mk})^2 \, dx d\tau \leq C_m,
\end{equation}
 where $C_m$ is some positive constant which may depend on $m.$ 
 Taking into account~(\ref{7.1}) -- (\ref{8.1}), (\ref{12.1}) and 
	H{$\ddot o$}lder's  inequality, we obtain
$$
\Big\vert \int_0^t \int_\Omega \left\{ (a_{m} - a_{mk}) \Delta \varphi_{mk} - a  (b_{m} - b_{mk}) \varphi_{mk} \right\}  w(x,\tau) \, dx d\tau \Big\vert
$$
$$
\leq M \left( \int_0^t \int_\Omega \frac{(a_{m} - a_{mk})^2}{a_{mk}}  w^2   \, dx d\tau \right)^\frac{1}{2}
\left( \int_0^t\int_{\Omega} a_{mk} (\Delta \varphi_{mk})^2 \, dx d\tau \right)^\frac{1}{2}
$$
 \begin{equation*}\label{13.1}
+ a M \int_0^t \int_\Omega \vert  b_{m} - b_{mk} \vert \, dx d\tau  \to 0 \,\,\, \textrm{as}\,\,\,  k \to \infty.
\end{equation*}
Now passing in (\ref{10.1}) to the limit as $k \to \infty,$  we
infer
 \begin{equation}\label{14.1}
\int_\Omega w(x,t)\psi (x)\, dx \leq \int_\Omega w(x,0)_+ \, dx +  \frac{a}{m^\nu}  T \vert \Omega \vert  
 +  \vert \partial\Omega \vert M^2  \int_0^t \int_\Omega w(y,\tau)_+ \, dy d\tau.
\end{equation}
Since (\ref{14.1}) holds for every $\psi (x),$ we can choose a sequence $\{ \psi_n \}$
converging in $L^1(\Omega)$ to $\psi (x) =1$ if $w(x,t) > 0$ and $\psi
(x) = 0$ otherwise. Hence, from (\ref{14.1}) we get
\begin{equation*}
\int_\Omega w(x,t)_+ \, dx \leq \int_\Omega w(x,0)_+ \, dx +  \frac{a}{m^\nu}  T \vert \Omega \vert  
 +  \vert \partial\Omega \vert M^2  \int_0^t \int_\Omega w(y,\tau)_+ \, dy d\tau.
\end{equation*}
Applying now Gronwall's inequality and passing to the limit as
$m \to \infty,$ the conclusion of the theorem follows for  $l \geq 1.$
For the case $l < 1$ we can consider
$w(x,t)=\underline{u}(x,t) - \overline{u}(x,t)$ and prove the
theorem in a similar way using the positiveness of a subsolution
or a supersolution.
\end{proof}

\begin{remark}
It is not difficult to show that a classical subsoluiton and a classical supersoluiton of (\ref{v:u})--(\ref{v:n}) 
are also a subsoluiton and a supersoluiton, respectively.

\end{remark}



\section{Global existence}\label{gl}

To formulate global existence result we need the following condition
\begin{equation*}\label{K}
|| k(x, y, t) ||_{L^\infty (\partial\Omega \times \overline{\Omega} \times [0, \infty))} = K_\infty < \infty.
\end{equation*}

\begin{theorem}\label{global}
Let at least one of the following conditions hold:

1) $l + \mu \leq 2,$

2) $\nu > \mu + l - 1,$

3) $l + \mu > 2, \,$  $\nu = \mu + l - 1,\,$ $ a / K_\infty $ is large enough.

Then every solution of (\ref{v:u})--(\ref{v:n}) is global.
\end{theorem}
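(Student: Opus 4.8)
The plan is to reduce the assertion, via the comparison principle of Theorem~\ref{Th3}, to the construction of a \emph{global supersolution}, and then to build such a supersolution in each case from a stationary self-similar profile, the borderline behaviour being governed by the competition between the absorption $-au^\nu$ and the nonlocal flux $\int_\Omega k\,u^l\,dy$.

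\emph{Reduction.} Suppose that for every $T>0$ we can produce a nonnegative $\overline u\in C^{2,1}(\overline{Q_T})$ with finite $L^\infty(Q_T)$ norm, such that $\overline u(x,0)\ge u_0(x)$ in $\Omega$, $\overline u_t\ge\Delta\overline u^\mu-a\overline u^\nu$ in $Q_T$ and $\partial\overline u/\partial\mathbf n\ge\int_\Omega k(x,y,t)\overline u^l(y,t)\,dy$ on $S_T$. Then $\overline u$ is a supersolution of the regularized problems (\ref{l1}) (the added term $a/m^\nu$ only strengthens $L_m\overline u\ge0$, up to adjusting constants), so, rerunning the iteration in the proof of Theorem~\ref{LE} on $Q_T$ with $w$ replaced by $\overline u$, one gets $u_{mj}\le\overline u$ on $Q_T$ by induction on $j$ (the flux $\int_\Omega k\,u_{m(j-1)}^l\,dy\le\int_\Omega k\,\overline u^l\,dy\le\partial\overline u/\partial\mathbf n$ by the inductive hypothesis); passing to the limits as $j\to\infty$ and then $m\to\infty$ gives a solution on $Q_T$ for \emph{every} $T$, i.e.\ a global solution. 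Since Theorem~\ref{Th3} yields uniqueness (a solution is at once a sub- and a supersolution, and $\overline u>0$ so the $l<1$ proviso is harmless), every solution of (\ref{v:u})--(\ref{v:n}) coincides with this one and is therefore global. It thus remains only to build $\overline u$.

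\emph{Construction in cases 2) and 3).} Fix an auxiliary function $\psi\in C^2(\overline\Omega)$ with $\psi\ge1$, $\partial\psi/\partial\mathbf n\ge1$ on $\partial\Omega$ and $\Delta\psi$ bounded — conveniently one which equals $1$ away from a thin collar of $\partial\Omega$ and rises across the collar, so that $L:=\max_{\overline\Omega}\psi$ is close to $1$ — and look for a \emph{stationary} supersolution $\overline u=(C+\varepsilon\psi)^{1/\mu}$ with constants $C$ large (at least $\|u_0\|_{L^\infty(\Omega)}^\mu$, enlarged if needed to dominate the approximating data from Lemma~\ref{L1}) and $\varepsilon>0$. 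Then $\overline u(x,0)\ge C^{1/\mu}\ge\|u_0\|_{L^\infty(\Omega)}$ automatically, while, using $k\le K_\infty$, $C\le\overline u^\mu\le C+\varepsilon L$ and $\overline u^\mu=C+\varepsilon\psi$, the differential inequality in $\Omega$ and the flux inequality on $\partial\Omega$ are implied by
\[
\varepsilon\,\|\Delta\psi\|_{L^\infty(\Omega)}\le a\,C^{\nu/\mu},
\qquad
\frac{\varepsilon}{\mu}\ge K_\infty|\Omega|\,(C+\varepsilon L)^{(l+\mu-1)/\mu}.
\]
The first says the absorption overcomes the subharmonicity the boundary flux forces on $\overline u^\mu$; the second, that the outward flux beats the nonlocal term. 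Taking $\varepsilon$ at the largest value permitted by the first inequality and inserting it into the second, both hold as soon as $C^{(l+\mu-1-\nu)/\mu}\lesssim a/K_\infty$, the implied constant depending only on $\mu$, $\Omega$ and the fixed $\psi$. If $\nu>\mu+l-1$ (case 2) the exponent is negative, so $C$ large works, compatibly with $C\ge\|u_0\|^\mu$ — hence for arbitrary initial data. If $\nu=\mu+l-1$ with $l+\mu>2$ (case 3) the exponent vanishes and the requirement becomes precisely that $a/K_\infty$ be large enough.

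\emph{Case 1) and the main obstacle.} When $l+\mu\le2$ one necessarily has $l<1$ (as $\mu>1$), so the nonlocal boundary source is sublinear and $\overline u$ need not use the absorption at all: I would seek a global supersolution of the majorizing absorption-free problem with $k$ replaced by $K_\infty$ (which is then also a supersolution of (\ref{v:u})--(\ref{v:n})), for instance of the form $\overline u^\mu=g(t)+\varepsilon\psi(x)$ with $g$ slowly increasing and suitably bounded, or else establish a direct $L^p$–iteration bound on $u$ itself; in either route the condition $l+\mu\le2$ is exactly what makes the boundary trace term absorbable, forcing at most polynomial-in-time growth of $\|u(\cdot,t)\|_{L^\infty(\Omega)}$. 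I expect the real difficulty of the argument to sit here, and more generally in the tension already visible in the two displayed inequalities: the interior condition wants $\overline u$ (that is, $C$) \emph{not too large}, so that $a\overline u^\nu$ can absorb the positive $\Delta\overline u^\mu$ produced by the flux, whereas the boundary condition wants $\overline u$ \emph{large enough}, with large normal derivative, to dominate both $\int_\Omega k\,\overline u^l\,dy$ and the initial datum; reconciling the two forces exactly the trichotomy in $\nu$ versus $\mu+l-1$, the equality case surviving only under smallness of $K_\infty/a$ and the sub-threshold case only because there $l<1$.
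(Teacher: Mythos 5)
Your overall strategy --- reduce everything to an explicit supersolution on each $Q_T$ and invoke Theorem~\ref{Th3} --- is the paper's, and your stationary ansatz does handle cases 2) and 3) when $l<1$, essentially as the paper does with (\ref{E4}) for $\alpha=0$. But there is a genuine gap for $l\ge 1$, which both case 2) and case 3) permit. For $\overline u=(C+\varepsilon\psi)^{1/\mu}$ with a \emph{fixed} smooth $\psi$, your two displayed conditions are mutually incompatible once $\|u_0\|_{L^\infty(\Omega)}$ is large. Indeed, the boundary requirement $\varepsilon/\mu\ge K_\infty|\Omega|\,(C+\varepsilon L)^{(l+\mu-1)/\mu}$ has exponent $(l+\mu-1)/\mu\ge 1$ when $l\ge 1$: for $l>1$ it forces $\varepsilon\le\bigl(\mu K_\infty|\Omega|L^{(l+\mu-1)/\mu}\bigr)^{-\mu/(l-1)}$ and hence an upper bound on $C$ depending only on $K_\infty$, $|\Omega|$ and $\psi$ --- independent of $a$ and of how large $\nu$ is --- while for $l=1$ it forces the smallness condition $\mu K_\infty|\Omega|L<1$, which is not assumed. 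So the step ``the exponent is negative, so $C$ large works'' fails: the term $\varepsilon L$ inside $(C+\varepsilon L)^{(l+\mu-1)/\mu}$ is not negligible, because the boundary inequality itself forces $\varepsilon\gtrsim C^{(l+\mu-1)/\mu}\ge C$. Structurally, $\partial\overline u/\partial\mathbf{n}=\mu^{-1}\overline u^{\,1-\mu}\,\partial(\overline u^{\,\mu})/\partial\mathbf{n}$ carries the damping factor $\overline u^{\,1-\mu}$, so a profile that is uniformly large cannot produce a normal derivative beating $K_\infty\int_\Omega\overline u^{\,l}dy$ when $l\ge1$ unless $\partial(\overline u^{\,\mu})/\partial\mathbf{n}$ is made enormous while $\int_\Omega\overline u^{\,l}dy$ stays moderate. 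This is exactly why the paper abandons the global ansatz there and builds the supersolution (\ref{E7}): a spike $\overline u^{\,\mu}=[(\rho s+\varepsilon)^{-\gamma}-\omega^{-\gamma}]_+^{\beta/\gamma}+A$ in the collar coordinates (\ref{E5}), with $\mu/l<\beta<2\mu/(l-1)$ tuned so that the normal derivative, of order $\varepsilon^{-1-\beta/\mu}$, dominates the integral, of order $\varepsilon^{1-\beta l/\mu}$, as $\varepsilon\to0$, while $A$ large absorbs the interior terms and the initial datum. Your argument contains no substitute for this construction, and it is the technical heart of the theorem.

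Case 1) is also only a plan in your write-up, not a proof. Note that under 1) alone (no hypothesis on $\nu$) a \emph{bounded} global supersolution need not exist --- solutions may grow without bound in infinite time --- so ``$g$ suitably bounded'' cannot be right; the paper's supersolution (\ref{E1111}) grows like $t^{1/(2-l-\mu)}$ (exponentially when $l+\mu=2$), and the interior inequality closes because of the power structure $\overline u^{\,1-l}=(1-l)[\psi+(\alpha t+\beta)^{(1-l)/(2-l-\mu)}]$ with $\psi$ from (\ref{E111}), not $\overline u^{\,\mu}=g+\varepsilon\psi$; with your ansatz the interior condition pushes $g$ up at a rate that the boundary condition cannot tolerate. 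The reduction paragraph and the $l<1$ computations are fine, but cases 2)--3) with $l\ge1$ and case 1) need to be reworked along the lines above.
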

\begin{proof}
In order to prove global existence of solutions we construct a
suitable explicit supersolution of~(\ref{v:u})--(\ref{v:n}) in
$Q_T$ for any positive $T.$ Suppose at first that $l + \mu < 2.$
Let
\begin{equation*}\label{k}
K_T =  || k(x, y, t) ||_{L^\infty (\partial\Omega \times \overline{\Omega} \times [0, T])}.
\end{equation*}
We now construct a supersolution of~(\ref{v:u})--(\ref{v:n}) in $Q_T$ as follows
\begin{equation}\label{E1111}
\overline u (x,t) = \left\{ (1 - l) \left[ \psi  (x) + \left( \alpha t + \beta \right)^\frac{1-l}{2-l-\mu } \right] \right\}^\frac{1}{1-l},
\end{equation}
where positive constants $\alpha, \beta$ will be chosen later and $\psi (x)$ is some positive solution of the following problem 
\begin{equation}
    \begin{cases}\label{E111}
  \Delta \psi (x) = b, \; x \in \Omega, \\
        \frac{\partial \psi (x)}{\partial\bf{n}} = \frac{b |\Omega|}{|\partial \Omega|}, \; x \in \partial \Omega
    \end{cases}
\end{equation}
with $b > 0.$ Due to (\ref{E1111}), (\ref{E111}) we have
\begin{align}\label{Eq1}
       L \overline u &\equiv \overline u_t - \Delta\overline u^\mu + a\overline u^\nu 
       \geq \frac{\alpha (1 - l)}{2- l - \mu} \overline u^{\mu + l - 1} 
       \frac{\left( \alpha t + \beta \right)^\frac{\mu-1}{2-l-\mu } }{\overline u^{\mu - 1}}
       \nonumber \\
       &- \mu b \overline u^{\mu + l - 1} - \mu (\mu + l -1) |\nabla \psi|^2 \overline u^{\mu + 2l - 2} \geq 0
      \end{align}
in $Q_T$ for large values of $\alpha$ and $\beta$  and 
\begin{equation}\label{E2}
 \frac{\partial \overline u}{\partial\nu} = \frac{b |\Omega|}{|\partial \Omega|} {\overline u^l} \geq \int_{\Omega} k(x,y,t)\overline u^l(y,t) \,dy
 \end{equation}
 on $S_T$ for large values of $b$ and $\beta.$ At last,
\begin{equation}\label{E3}
\overline u(x,0)\geq u_0(x) \,\,\, \textrm{ a.e. in} \,\,\, \Omega
\end{equation}
for  $\beta$  large enough. By virtue of (\ref{Eq1})--(\ref{E3}) and comparison principle 
the solution of (\ref{v:u})--(\ref{v:n}) exists globally.

For $l + \mu = 2$ it is easy to check that 
\begin{equation}\label{E4}
\overline u (x,t) = \left\{ (1 - l) \left[ \psi  (x) + \beta \exp{(\alpha t)} \right] \right\}^\frac{1}{1-l},
\end{equation}
is a supersolution of~(\ref{v:u})--(\ref{v:n}) in $Q_T$ for large values of 
$\alpha$ and $\beta.$ 

Suppose now that  $\nu > \mu + l - 1$ and $l < 1.$ Then function in (\ref{E4}) with $\alpha =0$ 
is a supersolution of~(\ref{v:u})--(\ref{v:n}) in $Q_T$ for $\beta$ large enough.
 
Let  $\nu > \mu + l - 1$ and $l \ge 1.$ 
To construct a supersolution we use the change of variables in a
	neighborhood of $\partial \Omega$ as in \cite{CPE}. Let
	$\overline x\in\partial \Omega$ and $\widehat{n}
	(\overline x)$ be the inner unit normal to $\partial \Omega$ at the
	point $\overline x.$ Since $\partial \Omega$ is smooth it is well
	known that there exists $\delta >0$ such that the mapping $\psi
	:\partial \Omega \times [0,\delta] \to \mathbb{R}^n$ given by
	$\psi (\overline x,s)=\overline x +s\widehat{n} (\overline x)$
	defines new coordinates ($\overline x,s)$ in a neighborhood of
	$\partial \Omega$ in $\overline\Omega.$ A straightforward
	computation shows that, in these coordinates, $\Delta$ applied to
	a function $g(\overline x,s)=g(s),$ which is independent of the
	variable $\overline x,$ evaluated at a point $(\overline x,s)$ is
	given by
	\begin{equation}\label{E5}
	\Delta g(\overline x,s)=\frac{\partial^2g}{\partial s^2}(\overline x,s)-\sum_{j=1}^{n-1}\frac{H_j(\overline x)}{1-s
		H_j (\overline x)}\frac{\partial g}{\partial s}(\overline x,s),
	\end{equation}
	where $H_j (\overline x)$ for $j=1,...,n-1,$ denote the principal
	curvatures of $\partial\Omega$ at $\overline x.$ For $0\leq s\leq \delta$
	and small $\delta$  we have
	\begin{equation}\label{E6}
	\left\vert\sum_{j=1}^{n-1} \frac{H_j (\overline x)}{1-s H_j (\overline
		x)}\right\vert\leq\overline c.
	\end{equation}
	
	Let $ \rho > 0, \,$ $0<\varepsilon<\omega<\min(\delta \rho, 1), \,$
	$\max\{\mu/l, 2\mu/(\nu - \mu)\} < \beta, \,$ $\beta < 2\mu/(l-1)$ for $l > 1,$  $0<\gamma<\beta/2,$ $A^\mu \ge \esssup_\Omega u_0(x).$
	For points in $Q_{\delta,T}=\partial \Omega \times [0,
	\delta]\times [0,T]$ of coordinates $(\overline x,s,t)$ define
	\begin{equation}\label{E7}
	\overline u (x,t)=  \overline u ((\overline x,s),t) = 
\left( \left[(\rho s + \varepsilon)^{-\gamma}-\omega^{-\gamma}\right]_+^\frac{\beta}{\gamma} + A \right)^\frac{1}{\mu},
	\end{equation}
	where $s_+=\max(s,0).$ For points in $\overline{Q_T}\setminus Q_{\delta,T}$
	we set  $ \overline u(x,t)= A.$ We prove that $ \overline u (x,t)$
	is the supersolution of~(\ref{v:u})--(\ref{v:n}) in $Q_T.$
	It is not difficult to check that
	\begin{equation}\label{E8}
	\left\vert\frac{\partial \overline u^\mu}{\partial s}\right\vert\leq \rho \beta \min\left(\left[ D(s)\right]^\frac{\gamma+1}{\gamma}\left[( \rho s + \varepsilon)^{-\gamma}-\omega^{-\gamma}\right]_+^\frac{\beta+1}{\gamma},\,(\rho s + \varepsilon)^{-(\beta+1)}\right),
	\end{equation}
	\begin{equation}\label{E9}
	\left\vert\frac{\partial^2 \overline u^\mu}{\partial s^2}\right\vert\leq \rho^2 \beta (\beta+1)\min\left(\left[D(s)\right]^\frac{2(\gamma+1)}{\gamma}\left[(\rho s+\varepsilon)^{-\gamma}-
	\omega^{-\gamma}\right]_+^\frac{\beta+2}{\gamma},\,(\rho s+\varepsilon)^{-(\beta+2)}\right),
	\end{equation}
	where
	\begin{equation*}
	D(s)= \frac{(\rho s+\varepsilon)^{-\gamma}}{ (\rho s+\varepsilon)^{-\gamma}-\omega^{-\gamma}}.
	\end{equation*}
	Then $D^\prime(s)>0$ and for any $\overline\varepsilon>0$
	\begin{equation}\label{E10}
	1\leq D(s)\leq 1+\overline\varepsilon, \; 0<s\leq{\overline s},
	\end{equation}
	where ${\overline s} = ( [\overline\varepsilon/(1+\overline\varepsilon)]^{1/\gamma}\omega-\varepsilon)/ \rho,\,$
	$\varepsilon<[\overline\varepsilon/(1+\overline\varepsilon)]^{1/\gamma}\omega.$
By  (\ref{E5})--(\ref{E10})  we can choose
	$\overline\varepsilon$ small so that in $Q_{{\overline s},T}$
\begin{align*}
        L\overline u &\geq   a \left( \left[(\rho s+\varepsilon)^{-\gamma}-\omega^{-\gamma}\right]_+^\frac{\beta }{\gamma} + A \right)^\frac{\nu}{\mu} - \rho^2 \beta(\beta+1)\left[ D(s)\right]^\frac{2(\gamma+1)}{\gamma}\left[(\rho s + \varepsilon)^{-\gamma}-\omega^{-\gamma}\right]_+^\frac{\beta+2}{\gamma} \\
       &- \rho \beta \overline c \left[ D(s)\right]^\frac{\gamma+1}{\gamma} \left[(\rho s + \varepsilon)^{-\gamma}-\omega^{-\gamma}\right]_+^\frac{\beta+1}{\gamma} \geq 0.
      \end{align*}
	
	Let $s\in[{\overline s},\delta].$ From (\ref{E5})--(\ref{E9}) we have
	\begin{equation*}
	\vert\Delta \overline u^\mu \vert \leq \rho^2 \beta(\beta+1)\omega^{-(\beta+2)}\left(\frac{1+\overline\varepsilon}
	{\overline\varepsilon}\right)^\frac{\beta+2}{\gamma} + \rho \beta\overline c\omega^{-(\beta+ 1)}\left(\frac{1+\overline\varepsilon}{\overline\varepsilon}\right)^\frac{\beta+1}{\gamma}
	\end{equation*}
	and  $L\overline u\geq0$ for $A$ large enough. Obviously, in $\overline{Q_T}\setminus Q_{\delta,T}$
	\begin{equation*}
	L\overline u = a A^\frac{\nu}{\mu}  \geq  0.
	\end{equation*}
		
	Now we prove the following inequality
\begin{equation}\label{E:4.6}
\frac{\partial \overline u}{\partial\nu} (\overline x,0,t) \geq
\int_{\Omega} K_T \overline u^l(\overline x,s,t) \, dy, \quad
(x,t) \in S_T
\end{equation}
for a suitable choice of $\varepsilon.$ To estimate the integral
$I$ in the right hand side of (\ref{E:4.6}) we use the change of
variables in a neighborhood of $\partial\Omega$ as above. Let
	\begin{equation*}
 \overline J= \sup_{0< s< \delta} \int_{\partial\Omega}
\vert J(\overline y,s)\vert \, d\overline y,
	\end{equation*}
where $J(\overline y,s)$ is Jacobian of the change of variables.
Then we have
\begin{align*}
I \leq & \theta  K_T \int_{\Omega} \left[ (\rho s +
\varepsilon)^{-\gamma} - \omega^{-\gamma}\right]_+^\frac{\beta
l}{\gamma\mu} \, dy + \theta  K_T A^\frac{l}{\mu} \vert\Omega\vert\\
\leq & \theta  K_T  \overline J \int_{0}^{(\omega - \varepsilon)/\rho} \left[ (\rho s + \varepsilon)^{-\gamma} -
\omega^{-\gamma}\right]^\frac{\beta
l}{\gamma \mu} \, ds + \theta  K_T A^\frac{l}{\mu} \vert\Omega\vert\\
\leq & \frac{\mu \theta  K_T  \overline J}{\rho (\beta l - \mu)} \left[
\varepsilon^{-\left(\frac{\beta l}{\mu}-1 \right)} - \omega^{-\left(\frac{\beta l}{\mu} - 1 \right)}\right] +
\theta  K_T A^\frac{l}{\mu} \vert\Omega\vert,
 \end{align*}
where $\theta = \max (2^{l/\mu-1}, 1). $   On the other hand, since
	\begin{equation*}
\frac{\partial \overline u}{\partial\nu} (\overline x,0,t) = -
\frac{\partial \overline u}{\partial s} (\overline x,0,t) = 
\frac{\rho \beta}{\mu} \varepsilon^{-\gamma -1} \left[ \varepsilon^{-\gamma} -
\omega^{-\gamma}\right]^\frac{\beta-\gamma}{\gamma} 
\left( \left[\varepsilon^{-\gamma}-\omega^{-\gamma}\right]^\frac{\beta}{\gamma} + A \right)^\frac{1 - \mu}{\mu},
	\end{equation*}
the inequality (\ref{E:4.6}) holds if $\varepsilon$ is small
enough. At last,
	\begin{equation*}
 u_0(x) \leq \overline u (x,0) \,\,\,  \textrm{ a.e. in } \,\,\,  \Omega.
	\end{equation*}
Hence, by comparison principle we get
	\begin{equation*}
  u(x,t) \leq \overline u (x,t) \,\,\,  \textrm{ a.e. in } \,\,\,
 \overline{Q}_T.
	\end{equation*}

Suppose now that  $l + \mu > 2, \,$  $\nu = \mu + l - 1\,$ and $l < 1.$ Then function in (\ref{E4}) with $\alpha = 0$ 
is a supersolution of~(\ref{v:u})--(\ref{v:n}) in $Q_T$ for suitable choice of $b \,$ and $\beta$ if $a/K_\infty > \mu |\partial \Omega|.$

For $l  = 1$ and  $\nu = \mu + l - 1 = \mu$ it is not difficult to check that 
\begin{equation*}
\overline u (x,t) = \left[ \psi  (x)  \right]^\frac{1}{\mu},
\end{equation*}
is a supersolution of~(\ref{v:u})--(\ref{v:n}) in $Q_T$ for  $a/K_\infty > \mu |\partial \Omega|$ under suitable choice of
$b$ and $\psi  (x).$ 

For $ l > 1$ and  $\nu = \mu + l - 1$ we can show in the same way as above that $ \overline{u}$ in (\ref{E7}) with  $\beta = 2\mu/(l-1)$ 
is a supersolution of~(\ref{v:u})--(\ref{v:n}) in $Q_T$  under suitable choice of
$\rho, \,$ $\varepsilon \,$ and $A$ if
\begin{equation*}
\frac{a}{K_\infty} > \frac{ \theta\mu (2 \mu + l -1)\overline J}{l + 1}.
\end{equation*}

\end{proof}


\section{Blow-up in finite time}\label{blow}

To formulate finite time blow-up result we suppose that for some $\tau > 0$
\begin{equation}\label{E11}
\essinf_{\partial \Omega \times \overline{\Omega} \times [0, \tau]}  k(x, y, t) = k_0 > 0.
\end{equation}

\begin{theorem}\label{blow-up}
Let  (\ref{E11}) hold, $ l + \mu > 2$ and either $\nu < \mu + l - 1$  or $\nu = \mu + l - 1$ and $a/k_0$ be small enough.
Then there exist solutions of (\ref{v:u})--(\ref{v:n}) with finite time blow-up.
\end{theorem}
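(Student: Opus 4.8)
The plan is to construct, for a suitable choice of initial datum, a classical subsolution of (\ref{v:u})--(\ref{v:n}) that becomes unbounded in finite time, and then to apply the comparison principle, Theorem~\ref{Th3}. Concretely, for an appropriate $T\in(0,\tau]$ I would exhibit a positive function $\underline u(x,t)$ on $\Omega\times[0,T)$ satisfying $L\underline u := \underline u_t - \Delta\underline u^\mu + a\underline u^\nu \le 0$ in $Q_{T'}$ and $\partial\underline u/\partial\mathbf{n} \le \int_\Omega k(x,y,t)\underline u^l(y,t)\,dy$ on $S_{T'}$ for every $T'<T$, with $\|\underline u(\cdot,t)\|_{L^\infty(\Omega)}\to\infty$ as $t\to T$. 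Taking $u_0(x)=\underline u(x,0)$, the Remark following Theorem~\ref{Th3} shows that $\underline u$ is a (weak) subsolution of (\ref{v:u})--(\ref{v:n}) on each $Q_{T'}$, $T'<T$, while the solution $u$ furnished by Theorem~\ref{LE} is a supersolution on its maximal interval of existence; Theorem~\ref{Th3} then gives $u\ge\underline u$ a.e. wherever both exist (the positivity hypothesis for $l<1$ holds since $\underline u$ is large). As $\underline u$ blows up at $t=T$, the solution $u$ cannot remain bounded up to $t=T$, which produces a solution with finite time blow-up; the restriction $T\le\tau$ ensures (\ref{E11}) is available, so that $k\ge k_0$ on $\partial\Omega\times\overline\Omega\times[0,T]$.

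It remains to build $\underline u$, and I would do this separately for $l<1$, $l=1$, $l>1$, in the spirit of the supersolutions of Section~\ref{gl}. For $l<1$ take the finite-time-blow-up analogue of (\ref{E1111})/(\ref{E4}),
\begin{equation*}
\underline u(x,t) = \left\{ (1-l)\left[\, \psi(x) + c_0\,(T-t)^{-m} \,\right] \right\}^{\frac{1}{1-l}}, \qquad m := \frac{1-l}{l+\mu-2} > 0,
\end{equation*}
where $\psi$ solves the linear Neumann problem (\ref{E111}) for a constant $b>0$, normalized so that $\psi\ge 0$. One computes $\Delta\underline u^\mu \ge \mu b\,\underline u^{\mu+l-1}$, $\partial\underline u/\partial\mathbf{n} = \underline u^{l}\,b|\Omega|/|\partial\Omega|$ on $\partial\Omega$, and $\underline u_t \le C(c_0)\,\underline u^{\mu+l-1}$ with $C(c_0)\to 0$ as $c_0\to\infty$ (here one uses $2-l-\mu<0$). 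Thus $L\underline u\le 0$ reduces to $a\,\underline u^\nu \le \frac{1}{2}\mu b\,\underline u^{\mu+l-1}$ on $Q_T$: if $\nu<\mu+l-1$ this holds once $\inf_{Q_T}\underline u=\{(1-l)(\min_\Omega\psi+c_0T^{-m})\}^{1/(1-l)}$ is large (take $T$ small), with no restriction on $a$; if $\nu=\mu+l-1$ it becomes $a<\mu b$. Finally, since for $c_0T^{-m}$ large $\underline u$ is almost independent of $x$, the boundary inequality reduces to $b|\Omega|/|\partial\Omega|\le k_0|\Omega|\,(1+o(1))^{-1}$, so one first fixes $b$ small, then $c_0$ large, then $T$ small. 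When $\nu=\mu+l-1$, the two conditions $a<\mu b$ and $b\lesssim k_0|\partial\Omega|$ are jointly achievable precisely when $a/k_0<\mu|\partial\Omega|$ --- the exact complement of the condition $a/K_\infty>\mu|\partial\Omega|$ used in Theorem~\ref{global}, which is the meaning of ``$a/k_0$ small enough''. For $l=1$ the same scheme works with $\underline u(x,t)=g(t)\,e^{\psi(x)}$, $\psi$ solving (\ref{E111}) and $g$ solving $g'=c\,g^\mu$ (which blows up in finite time); the boundary inequality turns into a condition on $\psi$ alone and the interior inequality into $g'+a\,g^\nu e^{(\nu-1)\psi}\le\mu b\,g^\mu e^{(\mu-1)\psi}$.

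For $l>1$ the additive ansatz degenerates --- it would require $\psi(x)+c_0(T-t)^{-m}$ to tend to $0$ from below uniformly in $x$ --- so instead I would concentrate $\underline u$ in a \emph{vanishing boundary layer}. Using the change of variables $(\overline x,s,t)$ near $\partial\Omega$ from (\ref{E5})--(\ref{E6}), take in $Q_{\delta,T}$
\begin{equation*}
\underline u((\overline x,s),t) = \left( \left[\, (\rho s+\varepsilon(t))^{-\gamma}-\omega^{-\gamma} \,\right]_+^{\beta/\gamma} + A \right)^{1/\mu},
\end{equation*}
and $\underline u\equiv A^{1/\mu}$ on $\overline{Q_T}\setminus Q_{\delta,T}$, with $\varepsilon(t)\downarrow 0$ as $t\to T$ (so that the blow-up takes place in the shrinking layer) and with $\beta,\gamma$ chosen in the range complementary to the one used for (\ref{E7}) in the global existence proof (I would expect to need $\beta>2\mu/(l-1)$ when $\nu=\mu+l-1$). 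One checks $L\underline u\le 0$ layer-wise just as in (\ref{E8})--(\ref{E10}), and the flux inequality holds because the nonlocal integral $\int_\Omega k\,\underline u^l\,dy$, which registers the large values of $\underline u$ throughout the layer, dominates the purely local normal slope $\partial\underline u/\partial\mathbf{n}$ at $\partial\Omega$; in the case $\nu=\mu+l-1$, matching the powers of $\varepsilon(t)$ produced by these two quantities fixes the smallness threshold for $a/k_0$. I expect the main obstacle to be precisely this last case: arranging this matching together with the interior inequality and the prescribed finite-time blow-up of $\varepsilon(t)$, by a careful joint choice of $\rho,\gamma,\beta,\omega,A$ and of the law $\varepsilon(t)$.
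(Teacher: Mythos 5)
For $l\le 1$ your constructions are essentially the paper's: the additive ansatz built on the Neumann function $\psi$ of (\ref{E111}) (the paper's (\ref{E110}), with an auxiliary exponent $\gamma\le l$ in place of your $l$, and a large additive constant $A$ in place of your small $T$), and the separable ansatz $g(t)e^{\psi(x)}$ for $l=1$ (the paper's (\ref{E130})); the bookkeeping leading to $a/k_0<\mu|\partial\Omega|$ in the critical case also matches.

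The case $l>1$, however, contains a genuine gap. Your candidate $\underline u$ equals the constant $A^{1/\mu}$ outside the collar $Q_{\delta,T}$, and there $L\underline u = aA^{\nu/\mu}>0$: a positive constant is never a subsolution of $u_t=\Delta u^\mu-au^\nu$, since it must decay under the absorption. (This sign works for the \emph{super}solution (\ref{E7}) in Theorem~\ref{global}, which is why the mirror image cannot simply be transplanted.) Taking $A=0$ does not repair it: near the edge of the support, where $\underline u\to 0$, the absorption $a\underline u^{\nu}$ decays like $[\,(\rho s+\varepsilon)^{-\gamma}-\omega^{-\gamma}\,]_+^{\beta\nu/(\gamma\mu)}$ while the diffusion decays like $[\,\cdot\,]_+^{(\beta-\gamma)/\gamma}$, so $\Delta\underline u^\mu - a\underline u^\nu<0$ there unless $\beta(\mu-\nu)/\mu<\gamma<\beta/2$, which is impossible for $\nu\le\mu/2$; since $\nu>0$ is arbitrary, the construction fails in general. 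This is precisely the obstruction the paper's proof is organized around: instead of a global subsolution it (i) first proves the positive, spatially constant lower bound $u\ge w_\nu(t)$ on $[0,\tau]$ by comparing with explicit subsolutions of the regularized problems (\ref{l1}) (the functions $w_\nu$ in (\ref{E150})--(\ref{E151})), and then (ii) runs a separate comparison on the auxiliary mixed problem (\ref{E:4.13}) posed only on the collar $\Omega_\gamma$, with Dirichlet data $v=u$ on the inner boundary controlled by step (i), using the subsolution $\xi(s,t)=C(t_0+s-t)^{-\sigma}$ of (\ref{E:4.14}), which is bounded away from zero and never has to satisfy any inequality in the interior of $\Omega$. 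If you want to salvage a single global subsolution you would at least have to replace your constant $A$ by a decreasing function $A(t)$ solving $A'\le-\mu aA^{(\nu+\mu-1)/\mu}$ and re-verify the collar inequalities with this extra time dependence; carried out, this essentially reproduces the paper's two-step argument. Your remaining steps for $l>1$ (the flux inequality and the critical threshold on $a/k_0$) are only sketched, but the exponent matching you describe is the same as in (\ref{E:4.16}) and is plausible once the interior issue is fixed.
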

\begin{proof}
 To prove the theorem  we construct a suitable subsolution with finite time blow-up and use a comparison argument. 
 Suppose at first that $ l \leq 1$ and  $\nu < \mu + l - 1.$ Then there exists $\gamma > 0$ such that 
$ \gamma < l,$ $\nu < \mu + \gamma - 1$  and $ \gamma + \mu > 2.$ 
We put
	\begin{equation}\label{E110}
 \underline{u} (x,t) = \left\{ (1 - \gamma) [\psi (x) + (T - t)^{-\alpha} + A] \right\}^\frac{1}{1 - \gamma},
	\end{equation}
where $A > 0,\,$ $T \in (0, \tau], \,$ $ \alpha > (1 - \gamma)/(\gamma + \mu - 2)$ 
and $\psi (x)$ was defined in (\ref{E111}). It is easy to check that
\begin{equation}\label{E12}
L \underline u  \leq \alpha (1 - \gamma)^{-\frac{\alpha + 1 }{\alpha}} \underline u^\frac{\alpha + 1 - \gamma}{\alpha} 
- \mu b \underline u^{\mu + \gamma - 1} + a \underline u^\nu  \le 0   \,\,\,  \textrm{ in } \,\,\, Q_T 
\end{equation}
for large values of $A.$ For $x \in \partial \Omega$ and $t \in (0, T)$ we have
\begin{equation}\label{E13}
  \frac{\partial \underline u (x,t)}{\partial\bf{n}} = \frac{b |\Omega|}{|\partial \Omega|} \underline u^\gamma  (x,t)
\leq k_0 \int_{\Omega}{\underline u^l (y,t)}\,dy 
\end{equation}
for $A$  large enough since $ \gamma < l.$ By  (\ref{E11}) --  (\ref{E13}) and comparison principle 
we conclude that any solution $u (x, t)$ of~(\ref{v:u})--(\ref{v:n}) blows up in finite time if   
	\begin{equation}\label{E139}
u_{0}(x) \geq \underline u (x,0) \,\,\,  \textrm{ a.e. in } \,\,\,  \Omega.
	\end{equation}
If $ l < 1$ and  $\nu = \mu + l - 1$ then in a similar way we show that $ \underline{u}$ in (\ref{E110}) 
with $\gamma = l$ is a subsolution of~(\ref{v:u})--(\ref{v:n}) in $Q_\sigma, \, \sigma \in (0, T)$ 
for  $a/k_0 < \mu |\partial \Omega|$ provided (\ref{E139}) holds and $A, \,$ $b$ are appropriately chosen.

For $ l = 1$ and  $\nu = \mu + l - 1 = \mu$ we introduce
\begin{equation}\label{E130}
 \underline{u} (x,t) = B (T - t)^{-\alpha} \exp [ \psi (x) ],
	\end{equation}
where $B \ge 1, \,$ $ \alpha > 1/(\mu - 1),\,$ $\psi (x)$ satisfies (\ref{E111}). Then 
\begin{equation}\label{E131}
L \underline u  \leq \alpha (T - t)^{- 1}  \underline{u} - b \mu \underline{u}^\mu + a \underline{u}^\mu \leq 0
 \,\,\,  \textrm{ in } \,\,\, Q_T 
\end{equation}
for $b > a/\mu$ and large values of $B.$  For $x \in \partial \Omega$ and $t \in (0, T)$ we obtain
\begin{equation}\label{E132}
  \frac{\partial \underline u (x,t)}{\partial\bf{n}} = \frac{b |\Omega|}{|\partial \Omega|} \underline u (x,t)
\leq k_0 \int_{\Omega}{\underline u (y,t)}\, dy 
\end{equation}
if
\begin{equation}\label{E133}
b \leq k_0 \frac{ |\partial \Omega| \int_{\Omega} \exp [ \psi (y) ] \,dy }{|\Omega| \sup_{\partial \Omega} \exp [ \psi (x) ]}.
\end{equation}
So, by (\ref{E111}), (\ref{E11}), (\ref{E130})  --  (\ref{E133}) and comparison principle 
we conclude that for $a/k_0 < \mu |\partial \Omega|$  under suitable choice of $ \psi (x), \,$
$b, \,$ $B$  any solution $u (x, t)$ of~(\ref{v:u})--(\ref{v:n}) blows up in finite time if (\ref{E139}) holds.

Let $l > 1, \,$  $\nu < \mu + l - 1$  and $u(x,t)$ be defined in (\ref{l13}). It is not difficult to check that the function
\begin{eqnarray*}
w_\nu (t) = \left\{ \begin{array}{ll} \left[ A^{1-\nu}  - 2 (1-\nu) a t \right]^{1/(1-\nu)}
\,\,\,&\textrm{for} \,\,\, 0 < \nu <1,\\
A \exp \left( - 2 a t \right)
\,\,\,&\textrm{for} \,\,\, \nu = 1
\end{array} \right.
\end{eqnarray*}
is a subsolution of (\ref{l1})  in $Q_{\tau}$ for $\tau \leq 1/(2a)$ and  large values of $m$ if
\begin{equation*}\label{E14}
 u_0(x) \geq A > 1
\end{equation*}
as well as the function
$$
w_\nu (t) = [2 (\nu-1) a (t + t_0)]^{-\frac{1}{\nu-1}}, \, t_0 > 0,  \,\,\, \textrm{for} \,\,\, \nu > 1
$$
is a subsolution of (\ref{l1})  in $Q_{\tau}$ for $m$  large enough if
\begin{equation*}\label{E15}
 u_0(x) \geq [2 (\nu-1) a t_0]^{-\frac{1}{\nu-1}}.
\end{equation*}
Applying a comparison principle to (\ref{l1}), we have
\begin{equation} \label{E150}
w_\nu (t) \le u_{mj} (x, t) \,\,\, \textrm{in} \,\,\, Q_{\tau}, \, j \in \mathbb{N}
\end{equation}
for large values of $m.$ 
Then from (\ref{l91}), (\ref{l13}), (\ref{E150}) we obtain
\begin{equation}\label{E151}
w_\nu (t) \leq  u(x,t)  \,\,\, \textrm{ in} \,\,\,  Q_{\tau}.
\end{equation}
Now we use the change of variables in a neighborhood of
$\partial\Omega$ as in Theorem~\ref{global}. Set $\Omega_\gamma =
\{ (\overline x,s): \overline x \in \partial\Omega, 0<s<\gamma
\}.$ 

Let us consider the following initial boundary value problem:
\begin{eqnarray}\label{E:4.13}
\left\{
\begin{array}{ll}
v_{t}=\Delta v^\mu - a v^\nu  \,\,\, \,\,\,& \textrm{for} \,\,\,
x \in \Omega_\gamma, \,\,\, 0 < t < t_0,   \\
\frac{\partial v(x,t)}{\partial \bf{n}} = \int_{\Omega_\gamma}
k(x,y,t) v^l (y,t) \, dy \,\,\,\,\,\,\,\,& \textrm{for}\,\,\, x
\in \partial\Omega, \,\,\,   0 < t < t_0, \\
v(x,t)= u(x,t) \,\,\,\,\,\,& \textrm{for}\,\,\, x \in
\partial\Omega_\gamma\setminus\partial\Omega, \,\,\,  0 < t < t_0, \\
v(x,0)= u_0 (x) \,\,\,\,\,\,& \textrm{for}\,\,\, x \in
\Omega_\gamma,
\end{array} \right.
\end{eqnarray}
where $\gamma$ and $t_0 \leq \tau $ will be chosen later. We can define the notions of a
supersolution and a subsolution of (\ref{E:4.13}) in a similar way
as in Definition~\ref{Sol}.

We define
\begin{equation}\label{E:4.14}
\xi(s,t) = C (t_0 + s-t)^{-\sigma},
\end{equation}
where $C > 0, \,$ $\sigma >2/(l-1)$ for $\nu \le \mu$ and $2/(l-1) < \sigma < 2/(\nu - \mu)$ for $\nu > \mu.$

We show that $\xi(s,t)$ is a subsolution of (\ref{E:4.13}) in $Q(\gamma,t_0)$ under suitable choice of
$t_0$ and $\gamma.$
Using (\ref{E5}), (\ref{E6}), (\ref{E:4.14}) we find
\begin{equation}\label{E:4.15}
\begin{split}
-\xi_t + \Delta \xi^\mu - a \xi^\nu &\geq (t_0  +
s-t)^{-\sigma \mu - 2} \left\{ \sigma \mu (\sigma \mu + 1) C^\mu
- \sigma (t_0 + \gamma)^{\sigma (\mu - 1) + 1} C \right.
 \\
 & - \sigma \mu \overline{c} ( t_0 + \gamma) C^\mu - a ( t_0 + \gamma)^{\sigma (\mu - \nu) + 2} C^\nu
 \left. \right\} \geq 0
\end{split} 
\end{equation}
in $Q(\gamma,t_0)$ if we take $\gamma$ and $t_0$ small
enough. Next we check the inequality on the boundary $\partial\Omega \times (0,  t_0).$ Let
$$
\underline J= \inf_{0< s< \gamma} \int_{\partial\Omega}
|J(\overline y,s)| \, d\overline y.
$$
In view of (\ref{E:4.14}) we have
\begin{equation}\label{E:4.16}
\begin{split}
&\frac{\partial \xi}{\partial \nu} (0,t) - k_0 \int_{\Omega_\gamma}
 \xi^l (s,t) \, dy \leq \sigma C (t_0 - t)^{-\sigma -1} -
k_0 \underline J C^l
\int_0^\gamma (t_0 + s - t)^{-\sigma l} ds   
\\
\leq& \sigma C (t_0 - t)^{-\sigma -1} - k_0 \underline J C^l \frac{(t_0
- t)^{-\sigma l + 1}}{\sigma l - 1} \left[ 1 - \left(
\frac{t_0}{t_0 + \gamma} \right)^{\sigma l - 1} \right] \leq 0
\end{split}
\end{equation}
for $x \in \partial\Omega, \, 0<t<t_0$ and small enough $t_0.$
Let
\begin{equation}\label{E16}
 u_0(x) \geq \gamma^{-\sigma}
\end{equation}
and
\begin{equation}\label{E17}
C \gamma^{-\sigma} \leq  \left[ A^{1-\nu} - 2 (1-\nu) a t_0 \right]^{1/(1-\nu)} \,\,\, \textrm{for} \,\,\, 0 < \nu <1,
\end{equation}
\begin{equation}\label{E18}
C \gamma^{-\sigma} \leq A \exp \left( - 2 a t_0 \right) \,\,\, \textrm{for} \,\,\,  \nu = 1,
\end{equation}
\begin{equation}\label{E19}
C \gamma^{-\sigma} \leq [4 (\nu-1) a t_0]^{-\frac{1}{\nu-1}} \,\,\, \textrm{for} \,\,\,  \nu > 1.
\end{equation}

Making use of (\ref{E151}), (\ref{E16}) -- (\ref{E19}) we obtain
\begin{equation}\label{E21}
\xi(s,t) \leq  u(x,t) \,\,\, \textrm{for} \,\,\, x \in
\Omega_\gamma,\, t=0 \,\,\,\textrm{and} \,\,\, x \in
\partial\Omega_\gamma\setminus\partial\Omega, \,\,\, 0 < t < t_0.
\end{equation}
From (\ref{E:4.15}),  (\ref{E:4.16}),  (\ref{E21}) we conclude that $\xi(s,t)$ is a subsolution of (\ref{E:4.13}) in
$Q(\gamma,t_0).$ It is easy to check that $u(x,t)$ is a supersolution of (\ref{E:4.13}) in
$Q(\gamma,t_0).$ Arguing as in Theorem~\ref{Th3}, we prove $u(x,t) \ge \xi(s,t)$ in
$Q(\gamma,t_0).$ Thus,  $u(x,t)$ blows up in finite time since $\xi(0,t) \to \infty$ as $t \to t_0.$

In the case $ l > 1$ and  $\nu = \mu + l - 1$ we show in a similar way that $ \underline{u}$ in (\ref{E:4.14}) with  $\sigma = 2/(l-1)$ 
is a subsolution of~(\ref{v:u})--(\ref{v:n}) in $Q(\gamma,t_0)$ under suitable choice of
$t_0, \,$ $\gamma,\,$ $ C $ and $u_{0}(x)$ if
\begin{equation*}
\frac{a}{k_0} < \frac{\mu (2 \mu + l -1)\underline J}{l + 1}.
\end{equation*}

\end{proof}

\subsection*{Acknowledgements}
This work is supported by the state program of fundamental research of Belarus
(grant 1.2.02.2).

\end{document}